\documentclass[10.5pt]{amsart}
\usepackage{ytableau}
\usepackage{amsthm}
\usepackage{amssymb}
\usepackage{latexsym}
\usepackage{multicol}
\usepackage{etoolbox} 
\usepackage{verbatim,enumerate}
\usepackage{accents}
\usepackage[utf8]{inputenc}
\usepackage{hyperref}
\usepackage{amsmath, amscd}
\usepackage{soul}
\usepackage{tikz} 
\usepackage{mathtools}
\usetikzlibrary{matrix,arrows,decorations.pathmorphing,arrows.meta,calc}
\newlength\cellsize 

\savebox2{%
\begin{picture}(10,10)
\put(0,0){\line(1,0){10}}
\put(0,0){\line(0,1){10}}
\put(10,0){\line(0,1){10}}
\put(0,10){\line(1,0){10}}
\end{picture}}

\newcommand\cellify[1]{\def\thearg{#1}\def\nothing{}%
\ifx\thearg\nothing\vrule width0pt height\cellsize depth0pt%
  \else\hbox to 0pt{\usebox2\hss}\fi%
  \vbox to 10\unitlength{\vss\hbox to 10\unitlength{\hss$#1$\hss}\vss}}

\newlength{\bibitemsep}
\newlength{\bibparskip}
\let\oldthebibliography\thebibliography
\renewcommand\thebibliography[1]{%
  \oldthebibliography{#1}%
  \setlength{\parskip}{\bibitemsep}%
  \setlength{\itemsep}{\bibparskip}%
}

\newcommand\tableau[1]{\vtop{\let\\=\cr
\setlength\baselineskip{-10000pt}
\setlength\lineskiplimit{10000pt}
\setlength\lineskip{0pt}
\halign{&\cellify{##}\cr#1\crcr}}}

\savebox7{
\begin{picture}(10,10)
  \put(5,5){\circle{9.4}}
\end{picture}}

\newcommand{\cirfy}[1]{\def\thearg{#1}\def\nothing{}%
\ifx\thearg\nothing\vrule width0pt height\cellsize depth0pt%
  \else\hbox to 0pt{\usebox7\hss}\fi%
  \vbox to 10\unitlength{\vss\hbox to 10\unitlength{\hss$#1$\hss}\vss}}

\newcommand\cirtab[1]{\vtop{\let\\=\cr
\setlength\baselineskip{-10000pt}
\setlength\lineskiplimit{10000pt}
\setlength\lineskip{0pt}
\halign{&\cirfy{##}\cr#1\crcr}}}

\theoremstyle{plain}

\newtheorem{thm}{Theorem}
\newtheorem*{lem}{Lemma}
\newtheorem*{cor}{Corollary}

\theoremstyle{definition}
\newtheorem*{example}{Example}

\newtheorem*{rem}{Remark}

\theoremstyle{remark}

\advance\textwidth by 1.2in  \advance\oddsidemargin by -.6in \advance\evensidemargin by -.6in

\newcounter{cnt}
 \makeatletter
\def\mydggeometry{\makeatletter\dg@YGRID=1\dg@XGRID=20\unitlength=0.003pt\makeatother}
\makeatother \theoremstyle{remark}

\numberwithin{equation}{section}
\makeatletter
\def\section{\def\@secnumfont{\mdseries}\@startsection{section}{1}%
  \z@{.7\linespacing\@plus\linespacing}{.5\linespacing}%
  {\normalfont\scshape\centering}}
\def\subsection{\def\@secnumfont{\bfseries}\@startsection{subsection}{2}%
  {\parindent}{.5\linespacing\@plus.7\linespacing}{-.5em}%
  {\normalfont\bfseries}}
\def\subsubsection{\def\@subsecnumfont{\bfseries}\@startsection{subsubsection}{3}%
  {\parindent}{.5\linespacing\@plus.7\linespacing}{-.5em}%
  {\normalfont\bfseries}}
  \makeatother

\begin{document}

\title{Two remarks on decomposition numbers of standard modules for quantum affine $\mathfrak{sl}_2$}

\author{Xin Fang}
\address{Lehrstuhl f\"ur Algebra und Darstellungstheorie, RWTH Aachen, Pontdriesch 10-16, 52062 Aachen, Germany}
\email{xinfang.math@gmail.com}
\thanks{X.F. was partially funded by the Deutsche Forschungsgemeinschaft: “Symbolic Tools in Mathematics and their Application” (TRR 195, project-ID 286237555).}

\author{Deniz Kus}
\address{Technical University of Munich, TUM School of Computation, Information and Technology, Department of Mathematics, Boltzmannstr. 3, 85748 Garching bei München, Germany}
\email{deniz.kus@tum.de}
\thanks{D.K. was partially funded by the Deutsche Forschungsgemeinschaft (DFG, German Research Foundation) –  grant 562506224.}

\author{Markus Reineke}
\address{Ruhr-Universität Bochum, Faculty of Mathematics, Universitätsstraße
150, 44780 Bochum, Germany}
\email{markus.reineke@ruhr-uni-bochum.de}
\thanks{}

\maketitle

\begin{abstract}
We use Nakajima's geometric approach to representations of quantum affine algebras and recent results on explicit descriptions of specific canonical basis elements, to derive closed positive formulas for certain decomposition numbers of representations of quantum affine $\mathfrak{sl}_2$. Moreover, we obtain a piecewise-linear closed formula for the $q$-characters of irreducible representations of quantum affine $\mathfrak{sl}_2$.
\end{abstract}

\maketitle

\section{Introduction}

The representation theory of quantum affine algebras has been a rich source of combinatorial and geometric structures. One central problem in this area is the computation of $q$-characters, introduced by Frenkel and Reshetikhin \cite{FR99}, as well as the decomposition numbers for standard modules in terms of simple modules. These numbers encode the multiplicities of simple modules as composition factors and play a crucial role in understanding the Grothendieck ring of the category of finite-dimensional representations - particularly its transition matrices between standard and simple objects.

In this work, we approach this problem via a geometric realization using Nakajima quiver varieties \cite{Nak1,Nak2}. In particular, in \cite{Nak2}, the $t$-analogues of $q$-characters are introduced, and the polynomials - analogs of Kazhdan–Lusztig polynomials for Weyl groups - whose specializations describe the decomposition numbers are defined in terms of stalk cohomology groups of intersection cohomology complexes on quiver varieties. Although these polynomials from \cite{Nak2} can, in principle, be computed by a combinatorial algorithm, this computation is in practice difficult.

The canonical basis is a distinguished basis of the positive part of a quantum group with remarkable properties. The transition matrices between the canonical basis and the PBW basis, corresponding to a reduced expression adapted to a quiver, can again be described in terms of stalk cohomology of intersection cohomology complexes on orbit closures of the corresponding quiver representations. When the quiver is equioriented of type $A_n$, closed formulas for the canonical basis elements corresponding to the so-called sparse representations are given in \cite{FR2}, which are applied to compute the intersection cohomology of the irreducible components of varieties of complexes. Identifying Nakajima quiver varieties of rank one with varieties of complexes and their desingularizations (Theorem \ref{real1}), we derive closed positive formulas for the multiplicities of sparse irreducible representations in standard modules (Theorem \ref{dn1}).

Finally, motivated by the above interplay between rank one quantum affine algebras and higher rank quiver representations, we observe, using \cite{KR}, another such correspondence. Namely, the decomposition of an irreducible representation into prime irreducibles in rank one corresponds exactly to the decomposition of a rigid representation of the equioriented type $A_n$ quiver (where $n$ and the dimension vector of the rigid module is determined by the Drinfeld polynomial) into indecomposables. Using the closed formulas from \cite{KR} naturally leads to explicit formulas for the $q$-characters of all simple modules whose Drinfeld polynomial zeros lie in a single $q$-orbit. It would be interesting to interpret the decomposition of rigid representations for other orientations (piecewise-linear formulas are still available in \cite{KR}) in the language of rank one quantum affine algebras, but this will appear elsewhere.

\textit{Organization of the paper:} In Section~\ref{section2}, we review the necessary background on quantum affine $\mathfrak{sl}_2$, its $q$-characters, and Kirillov–Reshetikhin modules. Section~\ref{section3} recalls the definition of graded Nakajima quiver varieties of rank one, their realization, and their connection to decomposition numbers. In Section~\ref{section4}, we introduce varieties of complexes, and recall the closed formulas for their intersection cohomologies. In Section~\ref{section5}, we relate the varieties of complexes and their desingularizations to Nakajima quiver varieties. Moreover, an explicit formula for decomposition numbers in terms of binomial coefficients is given. Finally, Section~\ref{section6} establishes the link to rigid quiver representations and provides explicit formulas for $q$-characters.

\textit{Acknowledgement: We are grateful to Ryo Fujita for his insightful question, and to David Hernandez for his helpful and stimulating discussions.}

\section{Representations of quantum affine $\mathfrak{sl}_2$}\label{section2}

\subsection{Quantum binomials}
Throughout this paper we denote by $\mathbb{C}$ the field of complex numbers and by $\mathbb{Z}$ (resp. $\mathbb{Z}_{+}$, $\mathbb{N}$) the subset of integers (resp. non-negative, positive integers). We fix $q\in\mathbb{C}^{\times}$ which is not a root of unity and for an indeterminate $v$ we set
$$[n]_v=\frac{v^n-v^{-n}}{v-v^{-1}},\ \ [n]_v!=\prod_{r=1}^n[r]_v,\ \ [0]_v!=1,\ \ \begin{bmatrix} a \\ n \end{bmatrix}_v=\prod_{i=1}^n \frac{v^{a+1-i}-v^{-a-1+i}}{v^i-v^{-i}},\ \  a\in\mathbb{Z},\ n\in\mathbb{N}.$$

Moreover, if $a\in\mathbb{N}$ we set also
$$\binom{a}{n}_t=\Big(v^{n(a-n)}\begin{bmatrix}a\\n\end{bmatrix}_v\Big)\Big|_{v^2=t}\in\mathbb{Z}[t].$$
The usual binomial coefficient (without subscript) is denoted by $\binom{a}{n}$.

\subsection{Quantum affine $\mathfrak{sl}_2$ and $q$-characters}
We recall some known results on finite-dimensional representation of quantum affine algebras associated to $\mathfrak{sl}_2$ and their $q$-characters following \cite{CP91,Le}. The quantum affine algebra $\mathbf{U}_q:=\mathbf{U}_q(\widehat{\mathfrak{sl}}_2)$ in Drinfelds second realization is the associative algebra over $\mathbb{C}$ with generators $x_r^\pm$, $h_m$, $K^{\pm}$, $r,m\in\mathbb{Z}, m\neq 0$ with defining relations
$$KK^-=K^-K=1,\ [h_m,h_k]=[K,h_k]=0,\ \ Kx_k^{\pm}K^-=q^{\pm 2}x_k^{\pm},$$
$$[h_m,x_k^{\pm}]=\pm\frac{1}{m}[2m]_qx_{k+m}^{\pm},$$
$$x_{k+1}^{\pm}x_{\ell}^{\pm}-q^{\pm 2}x_{\ell}^{\pm}x_{k+1}^{\pm}=q^{\pm 2}x_{k}^{\pm}x_{\ell+1}^{\pm}-x_{\ell+1}^{\pm}x_{k}^{\pm},$$
$$[x_k^{+},x_\ell^{-}]=\frac{\phi_{k+\ell}^+-\phi_{k+\ell}^-}{q-q^{-1}},$$
where the $\phi_k^\pm$ are determined by equating coefficients of powers of $u$ in the formula
$$\phi^{\pm}(u)=\sum_{r=0}^{\infty}\phi_{\pm r}^{\pm}u^{\pm r}=K^{\pm}\mathrm{exp}\left(\pm (q-q^{-1})\sum_{k=1}^{\infty} h_{\pm k} u^{\pm k}\right)$$
and $\phi_{\mp r}^{\pm}=0$ for $r\in \mathbb{N}.$ 
The subalgebra generated by $x_k^{\pm}$, $k \in \mathbb{Z}$, is denoted by $\mathbf{U}_q^{\pm}$, and the subalgebra generated by $h_k$ and $K^{\pm}$, $k \in \mathbb{Z} \setminus \{0\}$, is denoted by $\mathbf{U}_q^{0}$. From the defining relations, $\mathbf{U}_q^{0}$ can also be generated by $\phi_k^{\pm}$, $k \in \mathbb{Z}$ and we have a triangular decomposition $\mathbf{U}_q\cong \mathbf{U}_q^{-}\otimes \mathbf{U}_q^{0}\otimes\mathbf{U}_q^{+}$.

\begin{rem}
We omit the central element, since we are interested only in type 1 representations, which by definition means that the center acts trivially and the $K$-eigenvalues belong to $q^{\mathbb{Z}}$.
\end{rem}

Let $\mathcal{P}$ be the free abelian multiplicative group of monomials in the formal variables $\{Y_a: a\in\mathbb{C}^{\times}\}$ and $\mathcal{P}^+\subseteq \mathcal{P}$ the submonoid of dominant monomials, i.e., the monomials with non-negative powers in $Y_a$. We denote the category of finite-dimensional type 1 representations of $\mathbf{U}_q$ by $\mathcal{C}$. The Grothendieck ring of $\mathcal{C}$ is commutative and there is an injective ring homomorphism 
$$\chi_q: K_0(\mathcal{C})\rightarrow \mathcal{Y}:=\mathbb{Z}[Y^{\pm}_a]_{a\in\mathbb{C}^{\times}}$$
known as the $q$-character map. In particular, $\chi_q$ is multiplicative on tensor products. 

To be more precise, the map is given as follows. Decompose a representation $V$ in $\mathcal{C}$ into common generalized eigenspaces
$$V=\bigoplus_{\boldsymbol{\gamma}=(\gamma^{\pm}_r)_{r\in \mathbb{Z}}}V_{\boldsymbol{\gamma}},\ \ V_{\boldsymbol{\gamma}}=\{v\in V: \exists k\in \mathbb{N} \text{ such that }\forall r\in\mathbb{Z},\, (\phi^{\pm}_r-\gamma^{\pm}_r)^kv=0\}.$$
If $V_{\boldsymbol{\gamma}}\neq 0$, it turns out that the generating sequence of the eigenvalues has the following form (see \cite[Section 2.4.]{FR99})
\begin{equation}\label{for1}\sum_{k\geq 0}\gamma^{\pm}_k u^{\pm k}=q^{\mathrm{deg}(Q)-\mathrm{deg}(R)}\frac{Q(q^{-1}u)R(q^{}u)}{Q(q^{}u)R(q^{-1}u)}\end{equation}
as elements of $\mathbb{C}[[u^{\pm}]]$ where $Q(u)=\prod_{a\in \mathbb{C^{\times}}}(1-au)^{k_a}$ and $R(u)=\prod_{a\in \mathbb{C^{\times}}}(1-au)^{r_a}$ are polynomials. Then we have $$\chi_q(V):=\sum_{\boldsymbol{\gamma}}\dim(V_{\boldsymbol{\gamma}})\prod_{a\in\mathbb{C}^{\times}} Y_a^{k_a-r_a}$$
which is a Laurant polynomial in the variables $Y_a^\pm$ for $a\in\mathbb{C}^{\times}$.

The simple objects in $\mathcal{C}$ are, up to isomorphism, parametrized by Drinfeld polynomials in $\mathbb{C}[u]$ with constant term one (see \cite{CP91}) or equivalently by dominant monomials in $\mathcal{Y}$. We denote the simple object associated to $\boldsymbol{\pi}(u)\in \mathbb{C}[u]$ with $\boldsymbol{\pi}(0)=1$ by $V(\boldsymbol{\pi})$. Then $V(\boldsymbol{\pi})$ is generated by an $\ell$-highest weight $v_{\boldsymbol{\pi}}$ satisfying
$$x_r^+v_{\boldsymbol{\pi}}=0,\ \ r\in\mathbb{Z};\ \ \phi^+(u)v_{\boldsymbol{\pi}}=q^{\mathrm{deg}(\boldsymbol{\pi})}\frac{\boldsymbol{\pi}(q^{-1}u)}{\boldsymbol{\pi}(qu)}v_{\boldsymbol{\pi}}.$$

The Kirillov-Reshetikhin representations $W_{n,a}$ for $a\in\mathbb{C}^{\times}$ and $n\geq 1$ are the simple representations with respective Drinfeld polynomials 
\begin{equation}\label{DrKR}
 (1-au)(1-q^2au)\cdots (1-q^{2(n-1)}au).
\end{equation}
Their $q$-characters can be calculated inductively from the $T$-system equations proved in \cite{H06}:
$$\chi_q(W_{n,a})\chi_q(W_{n,aq^2})=\chi_q(W_{n+1,a})\chi_q(W_{n-1,aq^2})+1.$$

\begin{example} 
Given $a\in\mathbb{C}^{\times}$ and $n\geq 1$, the representation $W_{n,a}$ is obtained as follows. Consider the $(n+1)$-dimensional irreducible $\mathbf{U}_q(\mathfrak{sl}_2)$-representation $V_n$ with basis $\{v_0,\dots,v_n\}$ and action
$$Kv_i=q^{n-2i}v_i,\ \ Ev_i=[n-i+1]_qv_{i-1},\ \ Fv_i=[i+1]_qv_{i+1}.$$
There exists a homomorphism of algebras 
$$\mathrm{ev}_{a,n}: \mathbf{U}_q\rightarrow \mathbf{U}_q(\mathfrak{sl}_2)$$
such that 
$$\mathrm{ev}_{a,n}(x_k^+)=(aq^{n-1})^k K^kE,\ \ \mathrm{ev}_{a,n}(x_k^-)=(aq^{n-1})^k FK^k,\ \ k\in\mathbb{Z}.$$
Then $W_{n,a}\cong \mathrm{ev}_{a,n}^*V_n$ is isomorphic to the pull-back of $V_n$ via the above homomorphism.

To see that \eqref{DrKR} is the corresponding Drinfeld polynomial we note that (see \cite[Proposition 4.1] {CP91})
$$x_k^+v_i=a^kq^{k(2(n-i)+1)}[n-i+1]_qv_{i-1},\ \ x_k^-v_i=a^kq^{k(2(n-i)-1)}[i+1]_qv_{i+1}.$$
They imply:
$$\Phi^{\pm}(u)v_i=\frac{q^{n-2i}(1-aq^{-1}u)(1-aq^{2n+1}u)}{(1-aq^{2(n-i)+1}u)(1-aq^{2(n-i)-1}u)}v_i=q^{n-2i}\prod_{r=0}^{n-i-1}\frac{(1-aq^{2r-1}u)}{(1-aq^{2r+1}u)}\prod_{r=0}^{i-1}\frac{(1-aq^{2(n-r)+1}u)}{(1-aq^{2(n-r)-1}u)}v_i.$$
Thus we have written them in the form \eqref{for1} and for $i=0$ we obtain the desired Drinfeld polynomial. 

Moreover, defining $A_a=Y_{aq^{-1}}Y_{aq}$ and the highest weight monomial by $X_{n,a}=Y_aY_{aq^2}\cdots Y_{aq^{2(n-1)}}$, the $q$-character is given by
$$\chi_q(W_{n,a})=\sum_{i=0}^n\prod_{j=1}^{n-i}Y_{aq^{2(j-1)}}\prod_{j=1}^{i}Y^{-1}_{aq^{2(n-i+j)}} = X_{n,a} \sum_{i=0}^n\prod_{j=1}^iA^{-1}_{aq^{2(n-j)+1}}.$$
\end{example}

\begin{rem}
Note that the conventions used here differ from those in \cite{CP91}. 
In particular, the module corresponding to the Drinfeld polynomial~\eqref{DrKR} 
in \cite{CP91} is $\mathrm{ev}_{aq^{-1},n}^*V_n=W_{n, a q^{-1}}$ in the notation adopted above.
\end{rem}

\subsection{$t$-analogue of $q$-characters} 

The $t$-analogue of the $q$-character is a map 
$$\chi_{q,t}: K_0(\mathcal{C})\otimes_{\mathbb{Z}}\mathbb{Z}[t^{\pm}]\rightarrow \hat{\mathcal{Y}}_t:=\mathbb{Z}[t^\pm,V_a,W_a]_{a\in\mathbb{C}^{\times}}$$ defined axiomatically in \cite[Section 3]{Nak2}. Although we will be only interested in the $t=1$ specialization we discuss briefly the notations involved from \cite[Section 2]{Nak2} as they are needed later. 

Following Nakajima, a \emph{monomial} in $\mathbf{m}\in\hat{\mathcal{Y}}_t$ means a monomial in the generators $V_a,W_a$, written as 
\begin{equation}\label{monomial}\mathbf{m}=\prod_{a\in\mathbb{C}^{\times}}V_a^{v_a(m)}W_a^{w_a(m)},
\end{equation}
and similarly for monomials in $\mathcal{Y}_t:=\mathbb{Z}[t^\pm,Y_a^\pm]_{a\in\mathbb{C}^{\times}}$. Given a monomial $\mathbf{m}$ as in \eqref{monomial}, we define 
$$u_a(\mathbf{m})=w_a(\mathbf{m})-v_{q^{-1}a}(\mathbf{m})-v_{qa}(\mathbf{m})$$
and say that $\mathbf{m}$ is $\ell$-\textit{dominant} if $u_a(\mathbf{m})\geq 0$ for all $a\in\mathbb{C}^{\times}$. 
Further, define a $\mathbb{Z}[t^{\pm}]$-linear map $$\hat{\Pi}:\hat{\mathcal{Y}}_t\rightarrow\mathcal{Y}_t,\ \ \hat{\Pi}(\mathbf{m})=t^{-d(\mathbf{m},\mathbf{m})}\prod_aY_a^{u_a(\mathbf{m})},$$ 
where $d(\mathbf{m},\mathbf{m})$ is explicitly defined in \cite[Equation (2.1)]{Nak2}, but we will never need the definition. Then, on $K_0(\mathcal{C})$ we have an equality $\chi_q=\hat{\Pi}\circ\chi_{q,t}{\big|}_{t=1}$. 

We have a correspondence between Drinfeld polynomials and monomials in $\hat{\mathcal{Y}}_t$ in the variables $W_a$ given by
$$\boldsymbol{\pi}(u)=(1-a_1u)\cdots (1-a_ru)\mapsto e^{\boldsymbol{\pi}}:=W_{a_1}\cdots W_{a_r}$$
Similarly, we have an identification between quotients of Drinfeld polynomials $R(u)\in\mathbb{C}(u)$ and monomials in $\mathcal{Y}$: the monomial corresponding to $R(u)$ is denoted by $\tilde{e}^R\in \mathcal{Y}$.
Note that $\tilde{e}^R\in\mathcal{P}^+$ if and only if $R(u)$ is a polynomial.

Finally, we define a partial order among the monomials in $\hat{\mathcal{Y}}_t$ and $\mathcal{Y}$. Given two monomials $\mathbf{m},\tilde{\mathbf{m}}\in \hat{\mathcal{Y}}_t$ (resp. $\mathbf{m},\tilde{\mathbf{m}}\in \mathcal{Y}$) we say that $\mathbf{m}\leq \tilde{\mathbf{m}}$ if $\mathbf{m}/\tilde{\mathbf{m}}$ is a monomial in the variables $V_a$  (resp.  in the variables $A^{-1}_a$). Note that $\hat{\Pi}(V_a)=A^{-1}_a$, so the partial order is preserved by $\hat{\Pi}$.

\subsection{$q$-strings and standard modules} 

We quickly recall the realization of simple objects and the algebraic definition of standard modules. 

A finite-set of non-zero complex numbers of the form 
$$S_{n,a}=\{a,q^2a,\ldots,q^{2(n-1)}a\},\  a\in\mathbb{C}^{\times},\  n\in\mathbb{N}$$
is called a $q$-\textit{string}. Two $q$-strings $S_{n,a}$, $S_{m,b}$ are called in special position if they are not contained in each other and their union is again a $q$-string: that is,
$$\frac{b}{a}\in\left\{q^{2(n-p+1)}, q^{-2(m-p+1)}: 1\leq p\leq \min\{m,n\}\right\}.$$
Otherwise they are called in general position. 

We have a bijection between Drinfeld polynomials and finite multisets in $\mathbb{C}^{\times}$ (that is, functions $S:\mathbb{C}^{\times}\rightarrow\mathbb{Z}_+$ with finite support) by associating to $S$ the polynomial $\prod_{a\in \mathbb{C}^{\times}}(1-au)^{S(a)}.$ Every finite multiset $S$ admits a unique decomposition into a union of $q$-strings $S_{n_1,a_1},\ldots,S_{n_s,a_s}$ which are pairwise in general position. We will comment more on this decomposition and its connection to rigid representations of quivers in Section~\ref{section6}. If $\boldsymbol{\pi}$ is the Drinfeld polynomial associated to $S$, then
$$V(\boldsymbol{\pi})=W_{n_1,a_1}\otimes W_{n_2,a_s}\otimes\cdots \otimes W_{n_s,a_s}.$$
Hence, the finite-dimensional irreducible representations are tensor products of Kirillov--Reshetikhin representations.

Another important class of representations, introduced by Nakajima geometrically and interpreted algebraically in \cite{VV02}, is given by the so-called \textit{standard modules} $M(\boldsymbol{\pi})$, which we now recall. We write a Drinfeld polynomial $\boldsymbol{\pi}$ in the form    
$$\boldsymbol{\pi}(u)=(1-a_1u)\cdots (1-a_ru)$$
such that $k<\ell \implies a_{\ell}/a_k\notin q^{\mathbb{N}}$. This is possible since the directed graph with vertices $\{1,\dots,r\}$ and arrows $i\rightarrow j$ if $a_i\in q^{\mathbb{N}}a_j$ is acyclic: such a graph admits hence a topological sorting. The corresponding tensor product 
$$M(\boldsymbol{\pi}):=W_{1,a_1}\otimes  \cdots \otimes W_{1,a_r}$$
is the standard module associated to $\boldsymbol{\pi}$. It does not depend on the topological sorting since (see \cite[Theorem 5.1]{Chabraid})
$$a/b\notin q^{\mathbb{Z}}\text{ implies } W_{1,a}\otimes W_{1,b}\cong W_{1,b}\otimes W_{1,a}.$$
The geometric definition of these modules in terms of quiver varieties will be given in the next section.
\begin{rem}
The ordering in the definiton of $M(\boldsymbol{\pi})$ is important. For example, $W_{1,aq^2}\otimes W_{1,a}$ is a standard module whereas $W_{1,a}\otimes W_{1,q^2a}$ is not even a cyclic module.
\end{rem}

\section{Quiver varieties and varieties of complexes}\label{section3}

In this section, we introduce notations on graded Nakajima quiver varieties, later in Section \ref{section5}, those of type $A_1$ will be identified with varieties of complexes and their desingularizations. The main goal is to recall formulae on $t$-analogue of the $q$-character of standard modules, expressed in terms of intersection cohomologies of quiver varieties. 

\subsection{Graded Nakajima quiver varieties} 
 
We follow closely the notation of \cite[Section 4]{Nak2} with $\epsilon$ replaced by $q^{-1}$ throughout.  To specify the quiver of type $A_1$ we define $I=\{i\}$ and $E=\emptyset$. For every $a\in\mathbb{C}^{\times}$, we choose  complex vector spaces $V(a)=V_i(a)$ and $W(a)=W_i(a)$, so that the direct sum of all such spaces is finite-dimensional. We then consider the finite-dimensional vector space 
 $$\mathbf{M}^{\bullet}=\bigoplus_{a \in\mathbb{C}^{\times}}{\mathrm{Hom}}(W(a),V(qa))\oplus\bigoplus_{a\in\mathbb{C}^{\times}}{\mathrm{Hom}}(V(a),W(qa))$$
 and denote an element of this space as $(\alpha,\beta)=((\alpha_a)_{a\in\mathbb{C}^{\times}},(\beta_a)_{a\in\mathbb{C}^{\times}})$ for $\alpha_a\in{\mathrm{Hom}}(W(a),V(qa))$ and $\beta_a\in{\mathrm{Hom}}(V(a),W(qa))$. The reductive group
 $$G_V=\prod_{a\in \mathbb{C}^{\times}}{\mathrm{GL}}(V(a))$$
 acts on $\mathbf{M}^{\bullet}$ by
 $$(g_a)_a\cdot((\alpha_a)_a,(\beta_a)_a)=((g_{qa}\alpha_a)_a,(\beta_a g_a^{-1})_a).$$
We have a (momentum-type) map
$$\mu:\mathbf{M}^{\bullet}\rightarrow\bigoplus_{a\in \mathbb{C}^{\times}}{\mathrm{Hom}}(V(a),V(q^{2}a)), \ \ \ \mu((\alpha_a)_a,(\beta_a)_a)=(\alpha_{qa}\beta_a)_a,$$
 which is $G_V$-equivariant in a natural way and preserves the subvariety $\mu^{-1}(0)$. 
 
A point $(\alpha,\beta)\in\mu^{-1}(0)$ is said to be \emph{stable} if $\beta_a$ is injective for all $a\in \mathbb{C}^{\times}$ and denote by $\mu^{-1}(0)^{s}$ the open subset of stable points in $\mu^{-1}(0)$. The \textit{graded Nakajima quiver varietes} are defined as GIT quotients:
$$\mathfrak{M}_0^{\bullet}(V,W):=\mu^{-1}(0)//G_V,\ \ \ \mathfrak{M}^{\bullet}(V,W):=\mu^{-1}(0)^{s}/G_V.$$
The first variety parametrizes the closed $G_V$-orbits on $\mu^{-1}(0)$ and is realized as $\mathrm{Spec}$ of the ring of $G_V$-invariant functions on $\mu^{-1}(0)$. The second variety, being the $\mathrm{Proj}$ of the ring of semi-invariants, coincides in this setting with the set-theoretical quotient. Note also that the first variety is non-empty whereas the second one may be empty. 
 
For a stable point $(\alpha,\beta)\in\mu^{-1}(0)$, we denote its $G_V$-orbit in $\mathfrak{M}^{\bullet}(V,W)$ by $[\alpha,\beta]$. Moreover, if the $G_V$-orbit through $(\alpha,\beta)\in\mu^{-1}(0)$ is closed, then the corresponding point in $\mathfrak{M}_0^{\bullet}(V,W)$ will be also denoted by $[\alpha,\beta]$. We define the (possibly empty) open subset
 $$\mathfrak{M}_0^{\bullet,{\rm reg}}(V,W)=\{[\alpha,\beta]\in\mathfrak{M}_0^{\bullet}(V,W): (\alpha,\beta) \ \text{ has trivial stabilizer in $G_V$}\}.$$
There is a natural projective morphism $$\pi:\mathfrak{M}^{\bullet}(V,W)\rightarrow\mathfrak{M}_0^{\bullet}(V,W)$$ 
which maps the orbit $[\alpha,\beta]$ to the unique closed $G_V$-orbit in its closure in $\mu^{-1}(0)$.
We finally define the projective variety $\mathfrak{L}^{\bullet}(V,W):=\pi^{-1}(0).$
Hence, points in $\mathfrak{L}^{\bullet}(V,W)$ correspond to $G_V$-orbits in $\mu^{-1}(0)^{s}$ whose orbit closure in $\mathbf{M}^{\bullet}$ contains $0$.

Since $q$ is not a root of unity, the variety $\mathbf{M}^{\bullet}$, defined as a direct sum over $a\in\mathbb{C}^{\times}$, naturally decomposes along the orbits of the action of $q^2$ on $\mathbb{C}^{\times}$ given by multiplication. Therefore, we will fix $a\in\mathbb{C}^{\times}$ in the following and assume without further comment that $V(qb)=0=W(b)$ if $b\neq q^{2k}a$ for $k\in\mathbb{Z}$. An element in this $q^2$-orbit in $\mathbf{M}^{\bullet}$ can be depicted as follows:
$$\begin{array}{ccccccccccc}
&W(q^{2}a)&&&&W(a)&&&&W(q^{-2}a)&\\
\swarrow&&\nwarrow&&\swarrow&&\nwarrow&&\swarrow&&\nwarrow\\
&&&V(qa)&&&&V(q^{-1}a)&&&\end{array}$$
The coordinate ring of $\mathfrak{M}^{\bullet}_0(V,W)$ is generated by the $G_V$-invariant functions on $\mu^{-1}(0)$ given by
$$(\alpha,\beta)\mapsto \varphi(\beta_{q^{2k+1}a}\alpha_{q^{2k}a}),\ \ \ k\in\mathbb{Z},\ \ \varphi\in \mathrm{Hom}(W(q^{2k}a),W(q^{2(k+1)}a))^*.$$

We give a description of the quiver varieties:

\begin{thm}\label{real1} 
The variety $\mathfrak{M}^{\bullet}(V,W)$ is isomorphic to the subvariety of
$$\bigoplus_{k\in\mathbb{Z}}{\mathrm{Hom}}(W(q^{2k}a),W(q^{2(k+1)}a))\times\prod_{k\in\mathbb{Z}}{\rm Gr}_{\dim V(q^{2k-1}a)}(W(q^{2k}a))$$
of tuples $((f_k)_k,(U(q^{2k}a)\subseteq W(q^{2k}a))_k)$ such that $f_k(W(q^{2k}a))\subseteq U(q^{2(k+1)}a)$ and $f_k(U(q^{2k}a))=0$ for all $k\in\mathbb{Z}$. 

The affine variety $\mathfrak{M}^{\bullet}_0(V,W)$ is isomorphic to the subvariety 
$$\left\{(f_k)_{k\in\mathbb{Z}}\in \bigoplus_{k\in\mathbb{Z}}{\mathrm{Hom}}(W(q^{2k}a),W(q^{2(k+1)}a)): f_{k+1}f_k=0,\ \ {\rm rk}(f_k)\leq \dim V(q^{2k+1}a),\ \ \forall k\in\mathbb{Z}\right\}.$$ 

Under these isomorphisms, the map $\pi$ is given by forgetting the subspaces. 
\end{thm}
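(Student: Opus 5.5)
We prove both descriptions by working directly with the linear maps $\alpha,\beta$, using the invariants $\beta\alpha$ listed just before the theorem. To fix indices, note that $V(q^{2k-1}a)$ sits between $W(q^{2k-2}a)$ and $W(q^{2k}a)$ via
$$\alpha_{q^{2k-2}a}:W(q^{2k-2}a)\to V(q^{2k-1}a),\qquad \beta_{q^{2k-1}a}:V(q^{2k-1}a)\to W(q^{2k}a).$$
We put $f_k:=\beta_{q^{2k+1}a}\alpha_{q^{2k}a}:W(q^{2k}a)\to W(q^{2k+2}a)$. The moment map condition $\mu=0$ reads $\alpha_{q^{2k}a}\beta_{q^{2k-1}a}=0$ for all $k$.

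\emph{The smooth variety $\mathfrak{M}^{\bullet}(V,W)$.} Define a map $\Phi$ on $\mu^{-1}(0)^s$ by
$$(\alpha,\beta)\mapsto \bigl((f_k)_k,\,(U(q^{2k}a):=\operatorname{im}\beta_{q^{2k-1}a})_k\bigr).$$
By stability each $\beta_{q^{2k-1}a}$ is injective, so $U(q^{2k}a)$ has dimension $\dim V(q^{2k-1}a)$. The map $\Phi$ is $G_V$-invariant. We check the two conditions of the theorem:
\begin{itemize}
\item $f_k(W(q^{2k}a))\subseteq \operatorname{im}\beta_{q^{2k+1}a}=U(q^{2k+2}a)$ is clear from the definition of $f_k$.
\item $f_k|_{U(q^{2k}a)}=\beta_{q^{2k+1}a}\,\alpha_{q^{2k}a}\beta_{q^{2k-1}a}$. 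Since $\beta_{q^{2k+1}a}$ is injective, this vanishes if and only if $\alpha_{q^{2k}a}\beta_{q^{2k-1}a}=0$, i.e.\ if and only if $\mu=0$.
\end{itemize}

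For the inverse, take a tuple $((f_k),(U))$ in the incidence variety. Choosing $\beta_{q^{2k-1}a}$ amounts to choosing an isomorphism $V(q^{2k-1}a)\xrightarrow{\sim}U(q^{2k}a)$, i.e.\ a frame of $U(q^{2k}a)$. These choices form a $G_V$-torsor, since $G_V$ acts freely on injective $\beta$. Once $\beta$ is fixed, $\alpha_{q^{2k}a}=\beta_{q^{2k+1}a}^{-1}\circ f_k$ is uniquely determined, because $f_k$ lands in $U(q^{2k+2}a)$. By the equivalence above, the resulting pair satisfies $\mu=0$.

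Hence $\mu^{-1}(0)^s$ is identified, $G_V$-equivariantly, with the principal $G_V$-bundle of frames of the tautological bundles, pulled back to the incidence variety. Taking the quotient by the free action gives the claimed isomorphism as varieties: locally we trivialize the tautological bundles over affine charts of the Grassmannians, and there the section $\beta$ can be chosen algebraically.

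\emph{The affine variety $\mathfrak{M}_0^{\bullet}(V,W)$.} The coordinate ring is generated by the matrix entries of the $f_k$, so the invariants give a closed embedding $\Psi:\mathfrak{M}_0^{\bullet}(V,W)\hookrightarrow\bigoplus_k\mathrm{Hom}(W(q^{2k}a),W(q^{2k+2}a))$. We first show the image lies in the stated subvariety:
\begin{itemize}
\item $f_{k+1}f_k=\beta(\alpha\beta)\alpha=0$ by $\mu=0$;
\item $\mathrm{rk} f_k\le\dim V(q^{2k+1}a)$, since $f_k$ factors through $V(q^{2k+1}a)$.
\end{itemize}
For surjectivity, take $(f_k)$ satisfying both conditions. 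Split $V(q^{2k+1}a)=\operatorname{im}f_k\oplus C_k$, which is possible by the rank bound. Let $\beta_{q^{2k+1}a}$ be the inclusion on $\operatorname{im}f_k$ and $0$ on $C_k$, and let $\alpha_{q^{2k}a}$ be $f_k$ viewed as a map onto $\operatorname{im}f_k$. Then $\alpha_{q^{2k+2}a}\beta_{q^{2k+1}a}=f_{k+1}|_{\operatorname{im}f_k}\oplus 0=0$, using $f_{k+1}f_k=0$. So this is a point of $\mu^{-1}(0)$ with the prescribed invariants, and $\Psi$ is surjective onto the subvariety.

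Under both identifications, $\pi$ sends $[\alpha,\beta]$ to the closed orbit in its closure, which has the same invariants $\beta\alpha$. Hence $\pi$ is simply $(f,U)\mapsto f$, i.e.\ it forgets the subspaces.

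\emph{Main obstacle.} The hard step is upgrading the bijection onto the subvariety to a scheme-theoretic isomorphism. That requires the ideal generated by the entries of $f_{k+1}f_k$ and the $(\dim V(q^{2k+1}a)+1)$-minors of $f_k$ to be radical. I would first try to invoke the known reducedness and normality of varieties of complexes (De Concini--Strickland), which Section~\ref{section4} presumably recalls. Failing that, I would argue that $\mathfrak{M}_0^{\bullet}$ is reduced by construction, so the set-theoretic image suffices once the target is understood as a reduced variety, which is all that the intersection cohomology computations need.
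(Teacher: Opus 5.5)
Your proof is correct and follows essentially the same route as the paper. You use the same map $(\alpha,\beta)\mapsto((\beta\alpha)_k,(\operatorname{im}\beta)_k)$ followed by passing to $G_V$-orbits, and for $\mathfrak{M}_0^\bullet(V,W)$ the same canonical factorization of $f_k$ through a copy of $\operatorname{im}f_k$ inside $V(q^{2k+1}a)$, while supplying details the paper leaves implicit (the frame/torsor inverse, the closed embedding coming from the generating invariants, and explicit surjectivity). Your fallback on reducedness (a subring of the reduced coordinate ring of $\mu^{-1}(0)$ is reduced, so a closed embedding with the correct set-theoretic image is an isomorphism onto the reduced subvariety) already settles the scheme-theoretic concern, which the paper does not address at all.
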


\begin{proof}
Note that $a\in\mathbb{C}^\times$ has been fixed. The first isomorphism is given by 
$$(\alpha,\beta)\mapsto ((f_k)_{k\in\mathbb{Z}},(U(q^{2k}a))_{k\in\mathbb{Z}})$$
where $f_k:=\beta_{q^{2k+1}a}\alpha_{q^{2k}a}$ and $U(q^{2k}a):=\mathrm{im}\beta_{q^{2k-1}a}$. The conditions on $f_k$ and $U(q^{2k})$ follow from $(\alpha,\beta)\in\mu^{-1}(0)$ is stable. Now it suffices to pass to the $G_V$-orbits.

The description of $\mathfrak{M}^{\bullet}_0(V,W)$ follows similarly by noticing the property that $f_k$ admits a canonical factorization through  $V(q^{2k+1}a)$ and is contained in the orbit closure of any other factorization.

\end{proof}

We will mostly work with the above realizations without further comment. If the variety $\mathfrak{M}^{\bullet}(V,W)$ is non-empty, the projection of $\mathfrak{M}^{\bullet}(V,W)$ to the product of Grassmannians is a vector bundle and hence the variety is irreducible and smooth of dimension:
$$\mathrm{dim}\ \mathfrak{M}^{\bullet}(V,W)=\sum_{k\in\mathbb{Z}}\big(\mathrm{dim}W(q^{2k}a)-\mathrm{dim}V(q^{2k-1}a)\big)\big(\mathrm{dim}V(q^{2k+1}a)+\mathrm{dim}V(q^{2k-1}a)\big).$$

As in \cite[Section 4]{Nak2}, to graded vector spaces $V$, $W$ as above, we associate monomials in $\hat{\mathcal{Y}}_t$ as follows:
$$e^W=\prod_aW_a^{\dim W(a)},\; e^V=\prod_aV_a^{\dim V(a)}.$$
Note that $e^W=e^{\boldsymbol{\pi}_W}$ for the polynomial $\boldsymbol{\pi}_W(u)=\prod_{a\in\mathbb{C}^{\times}}(1-au)^{\text{dim}W(a)}$. We have 
$$\mathfrak{M}^{\bullet,{\rm reg}}_0(V,W)\neq \emptyset \iff e^V\cdot e^W \text{ is $\ell$-dominant and there exists a stable point in $\mu^{-1}(0)$}.$$
Also, in this case, $\mathfrak{M}^{\bullet,{\rm reg}}_0(V,W)$ is smooth of dimension equal to $\mathrm{dim }\ \mathfrak{M}^{\bullet}(V,W)$. In fact, $$\mathfrak{M}^{\bullet,{\rm reg}}_0(V,W)\subseteq \mathfrak{M}^{\bullet}_0(V,W)$$ is given by tuples $(f_k)_k$ such that ${\rm rk}(f_k)=\dim V(q^{2k+1}a)$ for all $k\in\mathbb{Z}$.

We define
$$\mathfrak{M}_0^{\bullet}(W):=\bigsqcup_{[V]}\mathfrak{M}_0^{\bullet,{\rm reg}}(V,W),\ \ \mathfrak{M}^{\bullet}(W):=\bigsqcup_{[V]}\mathfrak{M}^{\bullet}(V,W),\ \ \mathfrak{L}^{\bullet}(W):=\bigsqcup_{[V]}\mathfrak{L}(V,W)$$ 
where $[V]$ in the disjoint union denotes the isomorphism class as a graded vector space. In fact, if for any $a\in\mathbb{C}^\times$, $V(a)\subseteq V'(a)$, there exists a closed embedding $\mathfrak{M}_0^{\bullet}(V,W)\subseteq \mathfrak{M}_0^{\bullet}(V',W)$. The $\mathfrak{M}_0^{\bullet}(W)$ defined above coincides with the limit, and the decomposition above gives a stratification. Each stratum is precisely the orbit for the natural action of $G_W=\prod_{a\in\mathbb{C}^\times}{\mathrm{GL}}(W(a))$ on $\mathfrak{M}_0^{\bullet}(W)$ by change of bases and hence connected.

\subsection{Standard module and decomposition number} 

Using these geometries, we can give a geometric interpretation of decomposition numbers following \cite[Theorem 8.6]{Nak2}. 

First we recall the geometric definition of standard modules. Let $\boldsymbol{\pi}=\prod_{a\in\mathbb{C}^{\times}}(1-au)^{d_a}$ be a Drinfeld polynomial and choose a $\mathbb{C}^\times$-graded vector space $W$ so that $\text{dim }W(a)=d_{a}$. In particular, $e^{\boldsymbol{\pi}}=e^W$ and we have a bijection between isomorphism classes of $\mathbb{C}^{\times}$-graded vector spaces and monomials $\mathbf{m}\in \hat{\mathcal{Y}}_t$ satisfying $\mathbf{m}\leq e^W$. The map is given by $[V]\mapsto e^V\cdot e^W$ and denote the space corresponding to $\mathbf{m}$ by $V_{\mathbf{m}}$.

The standard module $M(\boldsymbol{\pi})$ is given as the Borel-Moore homology $\mathrm{H}_{*}(\mathfrak{L}^{\bullet}(W),\mathbb{C})$ with complex coefficients, equipped with an action of the quantum affine algebra by the convolution product \cite{Nak1}.
Moreover, the $t$-analogue of the $q$-character is given by $$\chi_{q,t}(M(\boldsymbol{\pi}))=\sum_{[V],k}t^k\ \mathrm{dim}\, \mathrm{H}_k(\mathfrak{L}^{\bullet}(V,W))\ e^V\cdot e^W$$
and thus applying $\hat{\Pi}$, 
$$\chi_{q}(M(\boldsymbol{\pi}))=\sum_{[V]}\chi(\mathfrak{L}^{\bullet}(V,W))\ \prod_{a\in\mathbb{C}^{\times}}Y_a^{d_a}A_a^{-\text{dim}V(a)}$$ 
where $\chi(\cdot)$ denotes the Euler characteristic. In particular, by Theorem~\ref{real1} we have that $\mathfrak{L}^{\bullet}(V,W)$ is just a product of Grassmannians and thus the Euler characteristic of $\mathfrak{L}^{\bullet}(V,W)$ is just given by products of binomial coefficients giving a closed formula.

To state the multiplicity formula, recall that $\mathfrak{M}^{\bullet,{\mathrm{reg}}}_0(V,W)$ is connected and let $\mathbf{IC}(\overline{\mathfrak{M}^{\bullet,\text{reg}}_0(V,W)})$ be the intersection cohomology complex associated with the constant local system $\mathbb{C}_{\mathfrak{M}^{\bullet,\text{reg}}_0(V,W)}$. Given two monomials $\mathbf{m},\tilde{\mathbf{m}}\leq e^{\boldsymbol{\pi}}$ in $\hat{\mathcal{Y}}_t$ we fix a point $x_{\mathbf{m}}\in \mathfrak{M}^{\bullet,\text{reg}}_0(V_{\mathbf{m}},W)$ and define 
$$Z_{\mathbf{m},\tilde{\mathbf{m}}}(t)=\sum_k\dim \mathrm{H}^k(i_{x_{\mathbf{m}}}^!\mathbf{IC}(\overline{\mathfrak{M}^{\bullet,\text{reg}}_0(V_{\tilde{\mathbf{m}}},W)}))\ t^{D-k}.$$ where again $D$ is a constant depending on $\mathbf{m}$ which we will not need. Note also that $Z_{\mathbf{m},\tilde{\mathbf{m}}}(t)\neq 0$ implies $\tilde{\mathbf{m}}\leq \mathbf{m}$.
The following is proved in \cite[Theorem 8.6]{Nak2}. We keep the same notation as above.
\begin{thm}\label{tnak} The multiplicity of a simple object in the standard module is given by
$$[M(\boldsymbol{\pi}):V(\widetilde{\boldsymbol{\pi}})]=\sum_{\tilde{\mathbf{m}}}Z_{e^{\boldsymbol{\pi}},\tilde{\mathbf{m}}}(1),$$ where the sum ranges over all monomials $\tilde{\mathbf{m}}\in \hat{\mathcal{Y}}_t$ such that $\tilde{\mathbf{m}}\leq e^{\boldsymbol{\pi}}$ and $\left.\hat{\Pi}(\tilde{\mathbf{m}})\right|_{t=1}=\tilde{e}^{\tilde{\boldsymbol{\pi}}}$.
\qed
\end{thm}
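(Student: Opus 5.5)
This statement is Nakajima's theorem \cite[Theorem 8.6]{Nak2}, specialized to type $A_1$; it is marked \qed because the paper takes it from that source. If I were to reconstruct the proof, I would follow Nakajima's geometric argument, with the decomposition theorem for the proper map $\pi$ as the central tool.

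\textbf{Step 1: decompose $\pi_*$.} Since $\pi:\mathfrak{M}^{\bullet}(W)\to\mathfrak{M}_0^{\bullet}(W)$ is projective and $\mathfrak{M}^{\bullet}(W)$ is smooth, the Beilinson--Bernstein--Deligne--Gabber decomposition theorem applies. Using that $\pi$ is semismall in the graded setting and that the strata $\mathfrak{M}_0^{\bullet,\mathrm{reg}}(V,W)$ are $G_W$-orbits, hence carry only trivial local systems, I would write
$$\pi_*\mathbb{C}_{\mathfrak{M}^{\bullet}(V,W)}[\dim]=\bigoplus_{V'} L_{V,V'}\otimes \mathbf{IC}\big(\overline{\mathfrak{M}^{\bullet,\mathrm{reg}}_0(V',W)}\big),$$
where the $L_{V,V'}$ are graded multiplicity spaces.

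\textbf{Step 2: match with representations.} Nakajima's convolution construction gives the following. The stalk of $\pi_*$ at a point $x_{\mathbf m}$ of a stratum computes a weight space of the standard module of the transversal slice. The spaces $L_{V',V}$ are the $\ell$-weight spaces of the simple module attached to the stratum. The costalk $i_{x_{\mathbf m}}^!\,\mathbf{IC}$ measures the multiplicity of that simple module inside the standard module.

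\textbf{Step 3: identify simples.} Strata are indexed by $\ell$-dominant monomials. The simple module attached to the stratum of $\tilde{\mathbf m}$ has Drinfeld polynomial read off from $\hat\Pi(\tilde{\mathbf m})|_{t=1}$. Summing the costalk dimensions at $t=1$ over all $\tilde{\mathbf m}$ whose image is $\tilde e^{\tilde{\boldsymbol\pi}}$ then gives the stated formula.

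\textbf{Main obstacle.} The hardest step is Step 2: proving that the convolution algebra acts irreducibly on the multiplicity spaces $L$. This is done through the transversal slice theorem and the Ginzburg-type isomorphism between the convolution algebra and $\mathrm{Ext}$ algebras of $\pi_*$. Since the paper treats all of this as input from \cite{Nak2}, I would simply cite it rather than reprove it.
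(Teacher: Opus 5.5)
Your proposal matches the paper: the paper also imports this statement from \cite[Theorem 8.6]{Nak2} without proof, and your outline of Nakajima's argument is the right one. One assertion in Step 1 is false, although nothing depends on it. In the graded setting $\pi$ is \emph{not} semismall.

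Take the paper's own example, $\boldsymbol{\pi}=(1-au)(1-aq^2u)^3$ with $r_0=1$. There $\mathfrak{M}^{\bullet}(V,W)\to\mathfrak{M}^{\bullet}_0(V,W)\cong\mathbb{C}^3$ is the blow-up of the origin. Its fibre $\mathbb{P}^2$ lies over a stratum of codimension $3$, so the semismallness inequality $2\cdot 2\leq 3$ fails. Indeed $\pi_*\mathbb{C}[3]\cong\mathbf{IC}(\mathbb{C}^3)\oplus\mathbb{C}_0[1]\oplus\mathbb{C}_0[-1]$.

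The decomposition theorem needs only properness and smoothness, and these shifts are exactly what your graded multiplicity spaces record. Similarly, ``$G_W$-orbit, hence trivial local systems'' also needs the stabilizers to be connected. That holds here, because they are unit groups of endomorphism algebras of $A_n$-quiver representations.
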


\subsection{Quantum affine $\mathfrak{sl}_2$}\label{subsec33} 

To make this concrete in our setting, assume that Drinfeld polynomials \begin{equation}\label{drdo}\boldsymbol{\pi}(u)=\prod_{i=0}^{n-1}(1-q^{2i}au)^{w_i},\ \ \widetilde{\boldsymbol{\pi}}(u)=\prod_{i=0}^{n-1}(1-q^{2i}au)^{h_i}\end{equation} are given. We choose vector spaces 
\begin{equation}\label{spch}
W(a), W(q^2a),\dots, W(q^{2(n-1)}a), V(q^{-1}a),V(qa),\dots, V(q^{2n-1}a)\end{equation}
satisfying $V(q^{-1}a)=V(q^{2n-1}a)=\{0\}$, $\mathrm{dim} W(q^{2i}a)=w_i$ for $i=0,\dots,n-1$ and 
$$w_i-\dim V(q^{2i-1}a)-\dim V(q^{2i+1}a)=h_i,\ \ i=0,\dots,n-1.$$

This corresponds in Theorem~\ref{tnak} exactly for choosing monomials $\tilde{\mathbf{m}}$ satisfying $\left.\hat{\Pi}(\tilde{\mathbf{m}})\right|_{t=1}=\tilde{e}^{\tilde{\boldsymbol{\pi}}}$; recall that $Z_{e^{\boldsymbol{\pi}},\tilde{\mathbf{m}}}(1)\neq 0$ implies $\tilde{\mathbf{m}}\leq e^{\boldsymbol{\pi}}$. If the above system has a solution, it has to be unique. Note that the monomial $e^{\boldsymbol{\pi}}$ does not involve variables $V_a$, thus for the element $x_{e^{\boldsymbol{\pi}}}$ in the definition of $Z_{e^{\boldsymbol{\pi}},\tilde{\mathbf{m}}}$ in Theorem~\ref{tnak}, we take the point $0\in\mathfrak{M}^{\bullet,\mathrm{reg}}_0(0,W)$. Therefore, for the choices made above
\begin{equation}\label{Eq:Multiplicity}
[M(\boldsymbol{\pi}):V(\widetilde{\boldsymbol{\pi}})]=\sum_k\dim \mathrm{H}^k(i_{0}^!\mathbf{IC}(\overline{\mathfrak{M}^{\bullet,\mathrm{reg}}_0(V,W)}))=\sum_k\dim \mathcal{H}_0^k(\mathbf{IC}(\overline{\mathfrak{M}^{\bullet,\mathrm{reg}}_0(V,W)})).
\end{equation}
Our aim will be to give a closed formula for the above expression.

\section{Perverse sheaves on varieties of complexes}\label{section4}

In this section, we will recall some results on varieties of complexes and canonical bases of quantum groups following \cite{FR1,FR2}.

We fix complex vector spaces $W^0,\ldots,W^{n-1}$ of dimensions $w_0,\ldots,w_{n-1}$ respectively, and denote by ${\rm Com}(W_*)$ the affine subvariety of
$${\mathrm{Hom}}(W^0,W^1)\times\cdots\times {\mathrm{Hom}}(W^{n-2},W^{n-1})$$
consisting of tuples of maps $(f_0,\dots,f_{n-2})$ such that $f_{i+1}\circ f_i=0$ for all $i=1,\dots,n-2$ where we set $f_{n-1}=0$. 

The group $G:=\mathrm{GL}(W^0)\times\ldots\times \mathrm{GL}(W^{n-1})$ acts on ${\rm Com}(W_*)$ via change of basis, and the orbits $\mathcal{O}({\mathbf{r}})$ are given by complexes of maps of fixed ranks $\mathbf{r}=(r_0,r_1,\ldots,r_{n-2})$,
$$\mathcal{O}({\mathbf{r}}):=\{(f_0,\ldots,f_{n-2})\in{\rm Com}(W_*)\, :\, {\rm rank}(f_i)=r_i,\; i=0,\ldots,n-2\}.$$
Equivalently, with the convention $r_{-1}=r_{n-1}=0$, we can describe these orbits as complexes with fixed Betti numbers, since
$$h_i=h_i({\mathbf{r}})=\dim \mathrm{H}^i((W_*,f_*))=\dim{\mathrm{Ker}}(f_i)-\dim {\mathrm{Im}}(f_{i-1})=w_i-r_{i-1}-r_i,\ \ 0\leq i\leq n-1.$$

We have $\mathcal{O}({\bf r}')\subseteq\overline{\mathcal{O}({\bf r})}$ if and only if ${\mathbf{r}}'\leq{\mathbf{r}}$ componentwise. Consequently, the irreducible components of ${\rm Com}(W_*)$ are the closures of the maximal orbits, which correspond to rank tuples ${\bf r}$ for which the set $\Omega({\mathbf{r}}):=\{i=0,\ldots,n-1\, :\, h_i({\mathbf{r}})\not=0\}$ is sparse, i.e. it contains no two consecutive indices.

\begin{rem}
The variety of complexes ${\rm Com}(W_*)$ can be naturally identified with a closed subvariety of the representation space $R_{\mathbf{d}}(Q)$ of the equioriented quiver $Q$ of type $A_n$ where $\mathbf{d}=(w_0,\dots,w_{n-1})$ and $w_i=\text{dim } W^i$. More precisely, ${\mathrm{Com}}(W_*)$ consists of those representations of $Q$ that decompose as direct sums of indecomposable representations supported at one or two vertices.
\end{rem}

Let ${\mathbf{IC}}(\overline{\mathcal{O}({\bf r})})$ be the $\ell$-adic intersection cohomology complex on the orbit closure $\overline{\mathcal{O}({\mathbf{r}})}$ and denote the stalks of the cohomology sheaves of ${\mathbf{IC}}(\overline{\mathcal{O}({\mathbf{r}})})$ over a point $f_*\in \mathcal{O}({\mathbf{r}}-{\mathbf{k}})\subseteq \overline{\mathcal{O}({\bf r})}$ by $\mathcal{H}^i_{f_*}({\mathbf{IC}}(\overline{\mathcal{O}({\mathbf{r}})}))$, where the tuple $\mathbf{k}:=(k_0,k_1,\ldots,k_{n-2})$ satisfies $\mathbf{k}\leq\mathbf{r}$ componentweise. The Poincaré polynomial of $\mathcal{H}^{*}_{f_*}({\mathbf{IC}}(\overline{\mathcal{O}({\mathbf{r}})}))$ can be computed from a particular coefficient in the expansion of a suitable canonical basis element in the PBW basis (with respect to a reduced word adapted to the equioriented type \(A_n\) quiver).

Using this approach, the main result of \cite{FR2}, namely Theorem 5.1, can be stated as follows:

\begin{thm}\label{t1} If $\overline{\mathcal{O}({\bf r})}$ is an irreducible component of ${\rm Com}(W_*)$ (i.e. $\Omega({\mathbf{r}})$ is sparse) and $f_*\in\mathcal{O}({\bf r}-{\bf k})\subseteq \overline{\mathcal{O}({\bf r})}$, then

\begin{align*}\sum_i&\dim\mathcal{H}^{i}_{f_*}({\bf IC}(\overline{\mathcal{O}({\bf r})})){\text{\Large t}}^{\frac{i}{2}}&\\&
=\sum_{(0\leq a_i\leq \mathrm{min}\{k_{i-1},k_i\})_{i\in \Omega({\bf r})}}{\text{\Large t}}^{\sum_{i\in\Omega(\mathbf{r})}(h_i(\mathbf{r})+a_i)a_i}\cdot\prod_{i\in\Omega({\bf r})}{k_i\choose k_i-a_i}_t{k_{i-1}\choose a_i}_t\cdot\prod_{i\not\in\Omega({\bf r})}{k_{i-1}+k_i\choose k_i}_t\end{align*}
with the usual convention $k_{-1}=k_{n-1}=0.$
\qed
\end{thm}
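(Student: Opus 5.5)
Theorem \ref{t1} is quoted from \cite[Theorem 5.1]{FR2}, so the plan is to reconstruct that argument along its original route, through Lusztig's geometric realization of the canonical basis for the equioriented quiver of type $A_n$. View ${\rm Com}(W_*)$ as a closed subvariety of $R_{\mathbf d}(Q)$ and each orbit $\mathcal{O}(\mathbf r)$ as the $G$-orbit of the representation
\[
M(\mathbf r)=\bigoplus_i \bigl(S_i^{h_i(\mathbf r)}\oplus E_{i,i+1}^{r_i}\bigr),
\]
where $S_i$ is the simple module at vertex $i$ and $E_{i,i+1}$ is the indecomposable supported on $\{i,i+1\}$. Since these orbits are exactly the $G$-orbits of $R_{\mathbf d}(Q)$ meeting ${\rm Com}(W_*)$, and orbit closures of $R_{\mathbf d}(Q)$ inside ${\rm Com}(W_*)$ coincide with those in $R_{\mathbf d}(Q)$, the stalks of ${\bf IC}(\overline{\mathcal{O}(\mathbf r)})$ at $\mathcal{O}(\mathbf r-\mathbf k)$ are the coefficients of the canonical basis element $b(\mathbf r)$ in the PBW basis attached to an adapted reduced word. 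So it suffices to compute that expansion explicitly when $\Omega(\mathbf r)$ is sparse.

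\textbf{Step 1.} Because $\Omega(\mathbf r)$ is sparse, write $b(\mathbf r)$ as a product of divided powers of simple root vectors and of root vectors $E_{\alpha_i+\alpha_{i+1}}$, ordered compatibly with the quiver. The guiding idea is that a sparse representation is a generic extension: multiplying divided powers $E_i^{(h_i)}$ and $E_{i,i+1}^{(r_i)}$ in the right order realizes the IC sheaf as a pushforward from a resolution. Concretely, use the resolution given by flags $U_i\subseteq W^i$ with $f_{i-1}(W^{i-1})\subseteq U_i\subseteq \ker f_i$ and $\dim U_i=r_{i-1}$, which is analogous to the description in Theorem \ref{real1}. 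Check that this map is small, or at least semismall with computable fibres. The fibre over a point of $\mathcal{O}(\mathbf r-\mathbf k)$ is an iterated Grassmannian bundle. At $i\notin\Omega(\mathbf r)$ it contributes ${k_{i-1}+k_i\choose k_i}_t$. At $i\in\Omega(\mathbf r)$ the possible positions of $U_i$ relative to $\ker f_i$ and ${\rm im} f_{i-1}$ stratify the fibre by an integer $a_i$, which produces ${k_i\choose k_i-a_i}_t{k_{i-1}\choose a_i}_t$.

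\textbf{Step 2.} Apply the decomposition theorem to this resolution to get a sum of shifted IC sheaves. Then show, via the bar-invariance and triangularity characterization of the canonical basis, that after the shift $t^{(h_i+a_i)a_i}$ the summands other than ${\bf IC}(\overline{\mathcal{O}(\mathbf r)})$ cancel or are absent. Equivalently, check that the proposed polynomial has the degree bound required by the IC support condition, namely that every term off the diagonal lies in $t\mathbb{Z}[t]$ after normalization. Then invoke uniqueness of the canonical basis element.

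\textbf{Main obstacle.} The hardest step is the degree and perversity check: showing that the weights $t^{\sum(h_i+a_i)a_i}$ make the sum satisfy the strict support inequalities for $\mathbf k\neq 0$. The approach I would try first is to bound the degree of each term factor by factor, using that the degree of ${n\choose m}_t$ is $m(n-m)$, and to compare the total with the codimension of $\mathcal{O}(\mathbf r-\mathbf k)$. The sparseness of $\Omega(\mathbf r)$ is what makes that codimension decouple into a sum over vertices.
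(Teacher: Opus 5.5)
Your overall framework is the one the paper points to: the stalks are PBW coefficients of the canonical basis element attached to the sparse representation, and \cite{FR2} supplies a closed formula for that element. However, the two steps you propose for computing it break down as soon as some $h_i(\mathbf r)>0$.

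\textbf{Step 1: the fibre is too big.} Over a point of $\mathcal{O}(\mathbf r-\mathbf k)$, the space $U_i/{\rm im}f_{i-1}$ in your resolution is an arbitrary $k_{i-1}$-dimensional subspace of ${\rm Ker}f_i/{\rm im}f_{i-1}\cong\mathbb{C}^{h_i+k_{i-1}+k_i}$. So the fibre is $\prod_i\mathrm{Gr}_{k_{i-1}}(\mathbb{C}^{h_i+k_{i-1}+k_i})$, with Poincar\'e polynomial $\prod_i\binom{h_i+k_{i-1}+k_i}{k_{i-1}}_t$.

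By $q$-Vandermonde, $\binom{k_{i-1}+k_i}{k_{i-1}}_t=\sum_a t^{a^2}\binom{k_i}{a}_t\binom{k_{i-1}}{a}_t$. Hence the fibre matches the claimed formula when all $h_i=0$, and in that case the map really is small. But whenever $h_i>0$ and $k_{i-1}>0$, the fibre's cohomology is strictly larger than the stated stalk, however you stratify it.

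The smallest instance is $n=2$, $\dim W^0=1$, $\dim W^1=3$, $\mathbf r=(1)$, $\Omega(\mathbf r)=\{1\}$.
\begin{enumerate}
\item Here $\overline{\mathcal{O}(\mathbf r)}=\mathrm{Hom}(\mathbb{C},\mathbb{C}^3)$ is smooth, and the formula correctly gives stalk $1$ at $0$.
\item Your resolution is the blow-up of $\mathbb{C}^3$ at the origin. It has fibre $\mathbb{P}^2$ over a stratum of codimension $3$, so it is neither small nor semismall.
\item Its pushforward is $\mathbb{C}_{\mathbb{C}^3}[3]\oplus\mathbb{C}_0[1]\oplus\mathbb{C}_0[-1]$.
\end{enumerate}
So the weights $t^{(h_i+a_i)a_i}$ do not come from a stratification of the fibre. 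They are what remains after other summands are removed.

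\textbf{Step 2: the missing ingredient.} In the decomposition theorem nothing cancels. The extra summands $\mathbf{IC}(\overline{\mathcal{O}(\mathbf r')})$, $\mathbf r'<\mathbf r$, occur with non-negative graded multiplicities. To subtract them you would need both those multiplicities and the stalks of the $\mathbf{IC}(\overline{\mathcal{O}(\mathbf r')})$. These $\mathbf r'$ are in general not sparse (already in the example, $\Omega((0))=\{0,1\}$), so neither the statement you are proving nor anything in your plan controls them.

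Checking the degree bound and then invoking uniqueness does not close the gap either. Uniqueness characterizes the canonical basis element only among \emph{bar-invariant} elements that agree with the PBW element of $\mathbf r$ up to lower terms with coefficients in $v^{-1}\mathbb{Z}[v^{-1}]$ (in the usual convention). You never produce a bar-invariant element with the proposed coefficients. The bar-invariant element you do have, the monomial attached to your resolution, has the coefficients $\prod_i\binom{h_i+k_{i-1}+k_i}{k_{i-1}}_t$, which are the wrong ones.

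For the same reason, your opening claim that $b(\mathbf r)$ is a product of divided powers of simple and length-two PBW root vectors is not right. Already $E_{\alpha_i+\alpha_{i+1}}$ is not bar-invariant.

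What is missing is exactly the content of \cite{FR2}: an explicit bar-invariant expression for the canonical basis element of a sparse representation whose PBW expansion yields these sums. Without it, the degree estimate you single out as the main obstacle cannot by itself identify the stalks.
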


\section{Decomposition Numbers of standard modules}\label{section5}

In this section we apply the constructions and results above to determine certain multiplicities of irreducible representations in standard modules.

We follow the notation of Section~\ref{subsec33}: Fix Drinfeld polynomials 
$\boldsymbol{\pi}$ and $\widetilde{\boldsymbol{\pi}}$ as in \eqref{drdo} and vector spaces as in \eqref{spch}. We set $\mathbf{r}=(r_0,r_1,\dots,r_{n-2})$ where $r_i=\mathrm{dim} V(q^{2i+1}a)$ with the usual convention $r_{-1}=r_{n-1}=0$ and consider ${\rm Com}(W_*)$ for the family $W^i=W(q^{2i}a)$, $i=0,\dots,n-1$. Using Theorem~\ref{real1} and the discussions in Section~\ref{section4}  we have
$$\mathfrak{M}^{\bullet}_0(V,W)\cong \overline{\mathcal{O}(\mathbf{r})}$$
and
$$\mathfrak{M}^{\bullet}(V,W)\cong\widetilde{\text{Com}}(W_*,{\mathbf{r}}):=\big\{(f_*,(U_i\subseteq W^{i+1}))\in{\rm Com}(W_*)\times\prod_{i=0}^{n-2}{\rm Gr}_{r_i}(W^{i+1})\, :$$
$${\rm Im}(f_{i})\subseteq U_i\subseteq{\rm Ker}(f_{i+1}),\, i=0,\dots,n-2\big\}.$$
Moreover, asking for trivial stabilizer in $G_V$ is equivalent to requiring $f_0,\ldots,f_{n-2}$ having full rank; we have hence
\begin{equation}\label{identif}
\mathfrak{M}^{\bullet,{\rm reg}}_0(V,W)\cong \mathcal{O}(\mathbf{r}).
\end{equation}

The obvious forgetful map $\pi$ to ${\rm Com}(W_*)$ is a desingularization of the closure of $\mathcal{O}({\bf r})$ (if the latter is non-empty). Moreover, note that the exponents of $\widetilde{\boldsymbol{\pi}}$ are by construction given by $h_i=h_i(\mathbf{r})$.

\begin{thm}\label{dn1} 
Let $\boldsymbol{\pi}$ and $\widetilde{\boldsymbol{\pi}}$ as in \eqref{drdo} and set
$$r_i:=\sum_{j=0}^i(-1)^{i+j}(w_{j}-h_{j}),\ \ i=0,\dots,n-2.$$ 
\begin{enumerate}
    \item If there exists an element $i\in\{0,\dots,n-2\}$ such that $r_i < 0$ or $r_{n-2}\neq w_{n-1}-h_{n-1}$, then we have $[M(\boldsymbol{\pi}):V(\widetilde{\boldsymbol{\pi}})]=0$.
    \item Otherwise, the following formula holds if $\Omega(\mathbf{r})$ is sparse:
    $$[M(\boldsymbol{\pi}):V(\widetilde{\boldsymbol{\pi}})]=\sum_{(0\leq a_i\leq \mathrm{min}\{r_{i-1},r_i\})_{i\in \Omega}}\prod_{i\in\Omega}{r_i\choose r_i-a_i}{r_{i-1}\choose a_i}\cdot\prod_{i\not\in\Omega}{r_{i-1}+r_i\choose r_i}.$$
\end{enumerate}
\end{thm}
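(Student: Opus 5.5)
The plan is to combine the geometric multiplicity formula \eqref{Eq:Multiplicity} with the identification \eqref{identif} and then specialize Theorem~\ref{t1} at the point $0$ and at ${\text{\Large t}}=1$.

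The first step is to solve the linear system from Section~\ref{subsec33}. Write $r_i=\dim V(q^{2i+1}a)$ with $r_{-1}=r_{n-1}=0$. The equations $w_i-r_{i-1}-r_i=h_i$ for $i=0,\dots,n-1$ read $r_i=(w_i-h_i)-r_{i-1}$. Solving this recursively from $r_{-1}=0$ gives exactly $r_i=\sum_{j=0}^i(-1)^{i+j}(w_j-h_j)$ for $0\le i\le n-2$. The last equation ($i=n-1$, with $r_{n-1}=0$) is the consistency condition $r_{n-2}=w_{n-1}-h_{n-1}$.

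For part (1), suppose some $r_i<0$ or the consistency condition fails. Then no graded space $V$ as in \eqref{spch} exists. Hence no monomial $\tilde{\mathbf m}\le e^{\boldsymbol\pi}$ satisfies $\hat\Pi(\tilde{\mathbf m})|_{t=1}=\tilde e^{\tilde{\boldsymbol\pi}}$. I would check this by observing that any such $\tilde{\mathbf m}$ has the form $e^Ve^W$, and that $V$ can be assumed supported on $q^{2i+1}a$ with $V(q^{-1}a)=V(q^{2n-1}a)=0$; otherwise some $u_b$ would be negative or nonzero outside the support of $\tilde{\boldsymbol\pi}$. The sum in Theorem~\ref{tnak} is then empty, so the multiplicity is $0$. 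The same conclusion holds if the $r_i$ are admissible but $\mathcal O(\mathbf r)$ is empty, i.e.\ $h_i<0$ for some $i$. Strictly speaking that case falls under (2); there $\Omega$ is still defined, and I would note that the formula must be read with $h_i\ge0$.

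For part (2), the $r_i$ are nonnegative and consistent, and $V$ exists uniquely. By \eqref{Eq:Multiplicity} and \eqref{identif}, the multiplicity equals $\sum_k\dim\mathcal H^k_0(\mathbf{IC}(\overline{\mathcal O(\mathbf r)}))$, where $\overline{\mathcal O(\mathbf r)}\subseteq{\rm Com}(W_*)$ and $W^i=W(q^{2i}a)$. The point $0$ lies in $\mathcal O(\mathbf r-\mathbf k)$ with $\mathbf k=\mathbf r$. Since $\Omega(\mathbf r)$ is sparse, $\overline{\mathcal O(\mathbf r)}$ is an irreducible component, so Theorem~\ref{t1} applies. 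Setting ${\text{\Large t}}=1$ turns each $\binom{a}{n}_t$ into the ordinary binomial $\binom an$ and each power of ${\text{\Large t}}$ into $1$. Substituting $k_i=r_i$ then yields precisely the stated formula.

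The main obstacle I anticipate is the bookkeeping between the two settings. The $\ell$-adic IC stalks of Theorem~\ref{t1} must be compared with the complex-coefficient IC and the $i_0^!$ versus stalk convention in \eqref{Eq:Multiplicity}. The paper already equates these in \eqref{Eq:Multiplicity}, and the total dimension is insensitive to shifts and to the comparison theorem, so I would cite that. The remaining delicate point is that the isomorphism of Theorem~\ref{real1} is compatible with the stratifications, so that $\mathfrak M_0^{\bullet,\rm reg}(V,W)$ really corresponds to the open orbit $\mathcal O(\mathbf r)$ and its closure to $\overline{\mathcal O(\mathbf r)}$.
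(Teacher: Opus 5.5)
Your proposal is correct and follows the paper's proof essentially verbatim. You solve the rank system, get vanishing from the empty index set in Theorem~\ref{tnak}, and otherwise evaluate Theorem~\ref{t1} at $f_*=0$ (so $\mathbf k=\mathbf r$) and at ${\text{\Large t}}=1$ via \eqref{Eq:Multiplicity} and \eqref{identif}. Your worry about $h_i<0$ is moot: the $h_i$ are exponents of the Drinfeld polynomial $\widetilde{\boldsymbol{\pi}}$, so $r_{i-1}+r_i=w_i-h_i\le w_i$ holds automatically and $\mathcal{O}(\mathbf r)$ is nonempty, exactly as the paper remarks.
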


\begin{proof}
Following the lines in Section~\ref{subsec33}, the condition $r_i<0$ means that there exists no $\widetilde{\mathbf{m}}$ with $\hat{\Pi}(\tilde{\mathbf{m}})=\tilde{e}^{\tilde{\boldsymbol{\pi}}}$ and the claim follows with Theorem~\ref{tnak}. Otherwise, we also have $r_{i-1}+r_i\leq w_i$ and $\mathbf{r}$ is in fact a rank tuple of some complex and sparsity guarantees that the orbit closure is an irreducible component. The theorem follows now by combining Theorem~\ref{tnak} (more precisely, the concretization in \eqref{Eq:Multiplicity} in Section~\ref{subsec33}) and Theorem~\ref{t1} for the choice $f_{*}=0$.
\end{proof}
In fact, via the above identifications, the coefficients appearing in Theorem~\ref{t1} can be interpreted as decomposition numbers associated with a Jantzen filtration, see \cite{G96,Nak1,Nak2,HF26} for further details.
\begin{example}
We consider $\widetilde{\boldsymbol{\pi}}(u)=(1-aq^2u)^2$ and $\boldsymbol{\pi}(u)=(1-au)(1-aq^2u)^3$ and obtain $r_0=1, r_1=0,$ and according to Theorem~\ref{dn1} we have $[M(\boldsymbol{\pi}):V(\widetilde{\boldsymbol{\pi}})]=1$.
\end{example}
As a corollary, keeping the same notation as above, we derive the following criterion:
\begin{cor}
The irreducible module $V(\widetilde{\boldsymbol{\pi}})$ appears as subquotient of $M(\boldsymbol{\pi})$ if and only if $\mathbf{IC}(\overline{\mathcal{O}({\bf r})})$ (up to some shift) appears as direct summand of the complex of constructible sheaves $R\pi_*\mathbb{C}_{\mathfrak{M}^{\bullet}(W)}$.
\end{cor}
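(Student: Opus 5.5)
The plan is to combine the multiplicity formula \eqref{Eq:Multiplicity} with the decomposition theorem for the projective morphism $\pi:\mathfrak{M}^{\bullet}(W)\to\mathfrak{M}^{\bullet}_0(W)$, restricted to the component indexed by $W$ and the fixed $q^2$-orbit of $a$.

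\emph{Step 1: reduce the left-hand side to a stalk.} By \eqref{Eq:Multiplicity} together with the identification \eqref{identif}, $[M(\boldsymbol{\pi}):V(\widetilde{\boldsymbol{\pi}})]$ equals $\sum_k\dim\mathcal{H}^k_0(\mathbf{IC}(\overline{\mathcal{O}(\mathbf{r})}))$, where $\mathbf{r}$ is determined by $\widetilde{\boldsymbol{\pi}}$ as in Theorem~\ref{dn1}. If no such $\mathbf{r}$ exists (case (1) of Theorem~\ref{dn1}), both sides of the corollary are empty. Indeed, the multiplicity is zero, and no stratum $\mathcal{O}(\mathbf{r})$ with Betti numbers $h_i$ exists.

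\emph{Step 2: decompose the pushforward.} Since each $\mathfrak{M}^{\bullet}(V,W)\cong\widetilde{\mathrm{Com}}(W_*,\mathbf{r}')$ is smooth and $\pi$ is projective, the decomposition theorem gives
$$R\pi_*\mathbb{C}_{\mathfrak{M}^{\bullet}(W)}\cong\bigoplus_{\mathbf{r}',k}L_{\mathbf{r}',k}\otimes\mathbf{IC}(\overline{\mathcal{O}(\mathbf{r}')})[k].$$
Here $\mathbf{r}'$ runs over the strata, which are the $G_W$-orbits. Because the $G_W$-orbits are connected and have connected stabilizers (this holds for products of general linear groups acting on complexes), only constant local systems occur. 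We then take the stalk at $0$. This is Nakajima's argument from \cite{Nak2}: the multiplicity space $L_{\mathbf{r},\bullet}$ of $\mathbf{IC}(\overline{\mathcal{O}(\mathbf{r})})$ is identified with the simple module $V(\widetilde{\boldsymbol{\pi}})$ (up to the grading). Simultaneously, $\mathrm{H}_*(\mathfrak{L}^{\bullet}(W))=\mathrm{H}^*(i_0^!R\pi_*\mathbb{C})$ is the standard module. This yields
$$M(\boldsymbol{\pi})\cong\bigoplus_{\mathbf{r}'}L_{\mathbf{r}'}\otimes \mathrm{H}^*(i_0^!\mathbf{IC}(\overline{\mathcal{O}(\mathbf{r}')})),$$
so the multiplicity of $V(\widetilde{\boldsymbol{\pi}})$ is the total dimension of the costalk $\mathrm{H}^*(i_0^!\mathbf{IC}(\overline{\mathcal{O}(\mathbf{r})}))$, provided $L_{\mathbf{r}}\neq0$.

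\emph{Step 3: conclude both directions.} Suppose $\mathbf{IC}(\overline{\mathcal{O}(\mathbf{r})})$ is a summand, i.e.\ $L_{\mathbf{r}}\neq 0$. The point $0$ lies in every orbit closure, and the costalk at $0$ is nonzero. Indeed, by $\mathbb{C}^\times$-contraction of the cone $\overline{\mathcal{O}(\mathbf{r})}$ to $0$, the costalk at $0$ is dual to the stalk at $0$, hence equals the global IC cohomology, which contains $\mathrm{IH}^0\neq0$. Therefore the multiplicity is positive. Conversely, suppose $V(\widetilde{\boldsymbol{\pi}})$ occurs. By Step 2 the only source of $V(\widetilde{\boldsymbol{\pi}})$ is the summand indexed by $\mathbf{r}$, so $L_{\mathbf{r}}\neq0$. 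Note also that $\mathcal{O}(\mathbf{r})$ is the open stratum of $\pi(\mathfrak{M}^{\bullet}(V,W))$, and over it $\pi$ is an isomorphism (the subspaces are forced to be $U_i=\mathrm{Im} f_i$). Hence $\mathbf{IC}(\overline{\mathcal{O}(\mathbf{r})})$ always appears with $L_{\mathbf{r}}\neq0$ whenever $\mathcal{O}(\mathbf{r})\neq\emptyset$. The two conditions are thus each equivalent to the nonemptiness of the stratum $\mathcal{O}(\mathbf{r})$, i.e.\ to case (2) of Theorem~\ref{dn1}.

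\emph{Main obstacle.} The delicate point is Step 2: identifying the multiplicity spaces $L_{\mathbf{r}}$ with the simple modules, and checking the positivity of the costalk at $0$. I would quote \cite[Theorem 8.6 and \S 14]{Nak2} for the former. For the latter I would use the cone structure via the scaling action, with a weight/purity argument to rule out cancellations; over $t=1$ Theorem~\ref{tnak} already sums dimensions, so no cancellation can occur.
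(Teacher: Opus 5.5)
Your proposal is correct and follows the paper's argument: the decomposition theorem for $\pi$ (multiplicity spaces $L_V$ identified, following Nakajima, with simple modules with distinct Drinfeld polynomials), combined with the fact that the (co)stalk at $0$ of $R\pi_*\mathbb{C}_{\mathfrak{M}^{\bullet}(W)}$ realizes $M(\boldsymbol{\pi})$. The additional points you supply are correct and only fill in what the paper leaves implicit: nonvanishing of the costalk at $0$ via the conic contraction, connected stabilizers (so only constant local systems occur), and birationality of $\pi$ over $\mathcal{O}(\mathbf{r})$ (so both conditions reduce to $\mathcal{O}(\mathbf{r})\neq\emptyset$).
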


\begin{proof} This follows along the lines of the proof of \cite[Theorem 8.6]{Nak2}. From the decomposition theorem we have a linear isomorphism (ignoring gradings)
  $$R\pi_*\mathbb{C}_{\mathfrak{M}^{\bullet}(W)}\cong \bigoplus_{V} L_V\otimes \mathbf{IC}(\overline{\mathfrak{M}_0^{\bullet,\mathrm{reg}}(V,W)})$$
and each vector spaces $L_V$ is an irreducible $\mathbf{U}_q$-module whose Drinfeld polynomial $\boldsymbol{\pi}_V$ is determined by $\left.\hat{\Pi}(e^V\cdot e^W)\right|_{t=1}=\tilde{e}^{\boldsymbol{\pi}_V}$. Since $M(\boldsymbol{\pi})$ can be identified with $\mathcal{H}_0^*(R\pi_*\mathbb{C}_{\mathfrak{M}^{\bullet}(W)})$ we get the desired claim with \eqref{identif}.
\end{proof}
\section{Tensor products and rigid quiver representations}\label{section6}

Let $Q$ be a Dynkin quiver with set of vertices $Q_0$, and let ${\bf d}\in\mathbb{Z}_+Q_0$ be a dimension vector for $Q$. Then there exists a unique representation (up to isomorphism) $V$ of dimension vector ${\bf d}$ such that ${\rm Ext}^1_Q(V,V)=0$ (called a rigid representation). Equivalently, if $V=U_1\oplus\dots\oplus U_s$ is the direct sum decomposition of $V$ into indecomposable representations $U_i$, then ${\rm Ext}_Q^1(U_i,U_j)=0$ for all $i,j=1,\dots,s$. 

In particular, let $Q$ be the linearly oriented quiver of type $A_n$ given by
$$1\longrightarrow 2\longrightarrow\dots\longrightarrow n.$$
The indecomposables $U_{i,j}$ of dimension vector $e_i+e_{i+1}+\cdots+e_j$ are parametrized by pairs $1\leq i\leq j\leq n$. From the known representation theory of $Q$ (in particular, from the Auslander-Reiten quiver) we see easily that ${\mathrm{Ext}}_Q^1(U_{i,j},U_{r,s})\neq 0$ if and only if $i+1\leq r\leq j+1\leq s\leq n$. Hence 
$${\mathrm{Ext}}_Q^1(U_{i,j},U_{r,s})={\mathrm{Ext}}_Q^1(U_{r,s},U_{i,j})=0\iff S_{j-i+1,q^{2i}a},\ S_{s-r+1,q^{2r}a} \text{ are in general position}$$
where we recall that $S_{j-i+1,q^{2i}a}=\{q^{2i}a,q^{2i+2}a,\ldots,q^{2j}a\}.$
From this, we conclude:

\begin{lem}\label{gp} For ${\bf d}\in\mathbb{Z}_+^n$, the following are equivalent:
\begin{enumerate}
\item The multiset $$\underbrace{q^2a,\ldots,q^2a}_{d_1\mbox{\small-times}},\underbrace{q^4a,\ldots,q^4a}_{d_2\mbox{\small-times}},\ldots,\underbrace{q^{2n}a,\ldots,q^{2n}a}_{d_n\mbox{ \small-times}}$$ is the union of $q$-strings $$S_{k_1,q^{2l_1}a},\ldots, S_{k_s,q^{2l_s}a}$$ which are pairwise in general position,
\item The rigid representation for the linearly oriented quiver of type $A_n$ of dimension vector ${\bf d}$ is given as the direct sum
$$U_{l_1,k_1+l_1-1}\oplus\dots\oplus U_{l_s,k_s+l_s-1}.$$
\end{enumerate}
\qed
\end{lem}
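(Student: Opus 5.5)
The plan is to reduce the lemma to the Ext-criterion stated just above it. That criterion says that for indecomposables $U_{i,j}, U_{r,s}$ we have ${\mathrm{Ext}}^1_Q(U_{i,j},U_{r,s})={\mathrm{Ext}}^1_Q(U_{r,s},U_{i,j})=0$ if and only if $S_{j-i+1,q^{2i}a}$ and $S_{s-r+1,q^{2r}a}$ are in general position. Two further ingredients are needed: uniqueness of the rigid representation of a given dimension vector, and uniqueness of the decomposition of a finite multiset into $q$-strings that are pairwise in general position.

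First I will set up the dictionary $U_{l,k+l-1}\leftrightarrow S_{k,q^{2l}a}$. Under this dictionary, the multiset of the string $S_{k,q^{2l}a}$ is $\{q^{2l}a,\dots,q^{2(l+k-1)}a\}$. It contains $q^{2m}a$ exactly once for $l\le m\le l+k-1$, which is precisely the dimension vector $e_l+\dots+e_{l+k-1}$ of $U_{l,k+l-1}$. Hence, for any family of strings $S_{k_t,q^{2l_t}a}$, their multiset union equals the given multiset (the one with $d_m$ copies of $q^{2m}a$) if and only if $\sum_t \underline{\dim}\,U_{l_t,k_t+l_t-1}={\bf d}$. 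Note that every string contained in $\{q^{2}a,\dots,q^{2n}a\}$ has this form with $1\le l\le l+k-1\le n$, so the dictionary is a bijection.

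For (1)$\Rightarrow$(2): assume the strings are pairwise in general position and set $U=\bigoplus_t U_{l_t,k_t+l_t-1}$. It has dimension vector ${\bf d}$. By the Ext-criterion, ${\mathrm{Ext}}^1_Q$ vanishes between distinct summands, in both orders. For $t=t'$ we use ${\mathrm{Ext}}^1_Q(U_{i,j},U_{i,j})=0$; this is the case $r=i$ of the stated criterion, since the condition $i+1\le r$ fails. So $U$ is rigid, and by uniqueness of the rigid representation it is the rigid representation of dimension vector ${\bf d}$. For (2)$\Rightarrow$(1): if the rigid representation is $\bigoplus_t U_{l_t,k_t+l_t-1}$, then rigidity gives ${\mathrm{Ext}}^1_Q(U_t,U_{t'})=0$ in both directions for all $t,t'$. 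The criterion then says the corresponding strings are pairwise in general position, and the dimension count shows their union is the given multiset.

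The one point that needs care is the identical-string case for distinct summands, i.e. when $S_{k_t,q^{2l_t}a}=S_{k_{t'},q^{2l_{t'}}a}$ with $t\neq t'$. I will check that this is consistent on both sides. Such strings are contained in each other, so by the definition they are not in special position and hence are in general position. On the quiver side the corresponding self-extension vanishes, as noted above. So repeated strings correspond exactly to repeated indecomposable summands.

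Beyond this, I expect no real obstacle; the proof is essentially this bookkeeping. If the Ext-criterion were not taken as given, the main work would be verifying it from the Auslander--Reiten quiver.
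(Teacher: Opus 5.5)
Your proposal is correct and follows the paper's implicit argument. You translate strings into interval modules via $S_{k,q^{2l}a}\leftrightarrow U_{l,l+k-1}$, apply the stated Ext/general-position criterion pairwise (including repeated summands, where $r=i$ gives vanishing self-extensions), and invoke uniqueness of the rigid module of dimension vector $\mathbf{d}$. The uniqueness of the decomposition of a multiset into strings in general position, which you list as a needed ingredient, is never actually used; it in fact follows from the lemma together with Krull--Schmidt.
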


The decomposition of the rigid representation of a fixed dimension vector into indecomposables has been described in \cite{KR} for any Dynkin quiver by a piecewise-linear formula. We get the following corollary.

\begin{cor} Given ${\bf d} \in \mathbb{Z}_+^n$, we set
\[
k_{ij} = \max \Big\{ 0, \min_{} \{d_k - d_{i-1},\, d_k - d_{j+1}: i \le k \le j\} \Big\},\ 1\leq i\leq j\leq n
\]
with the convention $d_0 = d_{n+1}= 0$ and define the Drinfeld polynomial $\boldsymbol{\pi}(u)=\prod_{i=1}^n(1-q^{2i}au)^{d_i}$.
\begin{enumerate} 
  \item Consider the multiset
\[
\underbrace{q^2 a, \ldots, q^2 a}_{d_1 \text{ times}}, \;\;
\underbrace{q^4 a, \ldots, q^4 a}_{d_2 \text{ times}}, \;\;
\ldots, \;\;
\underbrace{q^{2n} a, \ldots, q^{2n} a}_{d_n \text{ times}}.
\]
This multiset decomposes into a union of $q$-strings, which are pairwise in general position:
\[
\bigcup_{1 \le i \le j \le n} S_{j-i+1,\, q^{2i} a}^{\, k_{ij}}.
\]
\item We have an isomorphism of $\mathbf{U}_q$-modules 
$$V(\boldsymbol{\pi})\cong \bigotimes_{1\leq i\leq j\leq n}W_{j-i+1,q^{2i}a}^{\otimes k_{ij}}$$
and thus arriving at a piecewise-linear formula for the $q$-character of all simple representations whose Drinfeld polynomials has zeroes in a single $(q\cdot)$-orbit in $\mathbb{C}^*$:
$$\chi_q(V(\boldsymbol{\pi}))=\prod_{1\leq i\leq j\leq n}\left(Y_{q^{2i}a}\cdots Y_{q^{2j}a}\cdot\left(1+\sum_{t=i}^j A_{q^{2t+1}a}^{-1}A_{q^{2t+3}a}^{-1}\cdots A_{q^{2j+1}a}^{-1}\right)\right)^{k_{ij}}.$$
\end{enumerate}
\end{cor}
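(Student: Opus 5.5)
Part (1) follows from Lemma~\ref{gp} together with the piecewise-linear description in \cite{KR} of the rigid representation of dimension vector $\mathbf{d}$ for the linearly oriented quiver of type $A_n$. For this orientation, the multiplicity of $U_{i,j}$ as a direct summand of the rigid module is $k_{ij}$. The plan is to check this directly, so the argument does not depend on matching conventions with \cite{KR}.

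First, the summands must add up to $\mathbf{d}$, i.e.
$$\sum_{i\le k\le j}k_{ij}=d_k \quad\text{for every } k.$$
Second, no two summands $U_{i,j}$, $U_{r,s}$ with $k_{ij},k_{rs}>0$ may satisfy $i+1\le r\le j+1\le s$.

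A convenient way to see both is a greedy peeling construction. Take the minimal value $m$ of $d$ on a maximal interval $[i,j]$ of positive entries. Remove $U_{i,j}$ with multiplicity $m-\max\{d_{i-1},d_{j+1}\}$, which is $m$ since both neighbours vanish, and recurse. The resulting multiplicities are exactly the truncated minima $k_{ij}$: $U_{i,j}$ appears precisely for the intervals that become a connected component of positive entries at some level of the "water-filling". Intervals produced this way are pairwise nested or separated by a gap. Hence they never overlap in the pattern $i<r\le j+1\le s$, and in particular never as adjacent strings. Then Lemma~\ref{gp} turns this into the statement that the multiset is the union of the strings $S_{j-i+1,q^{2i}a}^{k_{ij}}$, pairwise in general position. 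This is the key step and the main obstacle. One has to verify carefully that the formula $\max\{0,\min_k(d_k-\max(d_{i-1},d_{j+1}))\}$ counts the levels $t$ for which $[i,j]$ is a component of $\{k: d_k\ge t\}$. Indeed, such $t$ range over $\max(d_{i-1},d_{j+1})<t\le\min_{i\le k\le j}d_k$.

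For part (2), the uniqueness of the decomposition into $q$-strings in general position, combined with the factorization of $V(\boldsymbol{\pi})$ recalled in Section~\ref{section2}, gives the tensor product decomposition of $V(\boldsymbol{\pi})$ at once. The $q$-character formula then follows from the multiplicativity of $\chi_q$ and the explicit formula for $\chi_q(W_{n,a})$ from the Example, applied with $n=j-i+1$ and base point $q^{2i}a$. It remains to reindex the products of $A^{-1}$'s: the $\ell$-th term $\prod_{m=1}^{\ell}A^{-1}_{q^{2i}a\,q^{2(n-m)+1}}$ becomes $A^{-1}_{q^{2t+1}a}\cdots A^{-1}_{q^{2j+1}a}$ with $t=j-\ell+1$.
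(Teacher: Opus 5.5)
Your proposal is correct. For part (2) it is the paper's argument: the factorization of $V(\boldsymbol{\pi})$ along $q$-strings in general position from \cite[Theorem 4.8]{CP91}, multiplicativity of $\chi_q$, and the Kirillov--Reshetikhin $q$-character from the Example. Your reindexing $t=j-\ell+1$ is right.

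For part (1) your route differs. The paper does not verify the formula for $k_{ij}$. It cites the piecewise-linear formula of \cite[Section 3.1]{KR} for the multiplicities of the indecomposable summands of the rigid module of dimension vector $\mathbf{d}$, and then translates this via Lemma~\ref{gp}. You prove the formula directly from the level-set description: $k_{ij}$ is the number of levels $t$ with $\max(d_{i-1},d_{j+1})<t\le\min_{i\le k\le j}d_k$, that is, the levels for which $[i,j]$ is a connected component of $\{k: d_k\ge t\}$. Both required facts follow:
\begin{itemize}
\item the intervals add up to $\mathbf{d}$, because each level $t\le d_k$ contributes exactly one component containing $k$;
\item any two such intervals are nested or separated by an index with $d<t$.
\end{itemize}

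This is a complete, elementary argument. The detour through Lemma~\ref{gp} is not even needed: ``nested or separated by a gap'' is exactly the definition of general position of the strings $S_{j-i+1,q^{2i}a}$ from Section~\ref{section2}. Read backwards through Lemma~\ref{gp}, your argument re-proves the \cite{KR} formula for the linear orientation.

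What the paper's route buys is the link with rigid representations and with \cite{KR}, whose formulas cover every Dynkin orientation; the authors highlight this as a direction for future work. Your route is self-contained and avoids matching conventions with \cite{KR}, at the cost of being specific to the linearly oriented case.
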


\begin{proof}
The first part of the corollary is a consequence of \cite[Section 3.1]{KR} and Lemma~\ref{gp} and the second part follows from \cite[Theorem 4.8]{CP91} and part (1).
\end{proof}
The $q$-character of the irreducible representations for quantum affine $\mathfrak{sl}_2$ are well-known. We emphasize that the new aspect is a piecewise-linear formula that determines the tensor product multiplicities of a Kirillov–Reshetikhin module directly from the corresponding Drinfeld polynomial.
\bibliographystyle{plain}
\bibliography{ref}

\end{document}